\documentclass[11pt]{amsart}

\usepackage[a4paper,margin=1in]{geometry}
\usepackage{amsmath,amssymb,amsthm,mathtools,booktabs}
\usepackage{enumitem}
\usepackage[colorlinks=true,linkcolor=blue,citecolor=blue,urlcolor=blue]{hyperref}
\newtheorem{theorem}{Theorem}[section]
\newtheorem{lemma}[theorem]{Lemma}
\newtheorem{proposition}[theorem]{Proposition}
\newtheorem{corollary}[theorem]{Corollary}
\theoremstyle{remark}
\newtheorem{remark}[theorem]{Remark}

\newcommand{\Tr}{\operatorname{Tr}}
\newcommand{\Ree}{\operatorname{Re}}

\newcommand{\KF}[2]{\|#1\|_{(#2)}}
\newcommand{\UIN}[1]{\lvert\!\lvert\!\lvert #1\rvert\!\rvert\!\rvert}

\title[Crossed-Heinz inequalities]{Crossed-Heinz Inequalities:
	The Two-Dimensional Threshold and Higher-Dimensional Stability}
\author{Mohammad Sababheh}
\address{Program of Mathematics, Abdullah Al Salem University, Khaldiya, Kuwait City, Kuwait}
\email{sababheh@yahoo.com}
\author{Jean-Christophe Bourin}
\address{Université Marie et Louis Pasteur, CNRS, LmB (UMR 6623), F-25000 Besançon, France
	Corresponding authors}
\email{jcbourin@univ-fcomte.fr}
\subjclass[2020]{15A60, 15A45, 47A30, 47A63}
\keywords{Unitarily invariant norms, Ky Fan norms, Heinz inequalities, positive matrices, singular values, weak majorization, Bourin's question}
\date{}

\begin{document}
	
	\maketitle
	
	\begin{abstract}
		
		For positive matrices $A,B$ and $0<t<1$, Bourin asked whether the
		crossed-Heinz expression
		\[
		C_t(A,B)=A^tB^{1-t}+B^tA^{1-t}
		\]
		satisfies
		\[
		\UIN{C_t(A,B)}\le \UIN{A+B}
		\]
		for every unitarily invariant norm.  The most direct route to this
		inequality is to compare $C_t(A,B)$ with the ordinary Heinz expression
		\[
		H_t(A,B)=A^tB^{1-t}+A^{1-t}B^t,
		\]
		because the classical Heinz inequality gives
		$\UIN{H_t(A,B)}\le\UIN{A+B}$.  This route, however, encounters a
		genuine dimensional obstruction: the stronger comparison
		$\UIN{C_t(A,B)}\le\UIN{H_t(A,B)}$ is known to fail for positive
		definite $3\times3$ matrices, and hence, by block-diagonal embedding,
		in every dimension $n\ge3$.
		
		We first determine the exact threshold for this obstruction.  For every
		pair of positive $2\times2$ matrices and every $0<t<1$, we prove the
		weak-majorization relation
		\[
		s(C_t(A,B))\prec_w s(H_t(A,B)).
		\]
		Thus the stronger crossed-Heinz/Heinz comparison holds for every
		unitarily invariant norm in dimensions $n\le2$, while it fails in
		general for every $n\ge3$.  In particular, Bourin's inequality holds
		for all positive $2\times2$ matrices throughout the full parameter
		range.
		
		The failure of the Heinz comparison from dimension three onward does
		not, however, settle Bourin's original problem; it only blocks this
		natural proof strategy.  We therefore turn directly to the Bourin
		deficit in arbitrary dimension.  Near every positive definite matrix
		with simple spectrum, we obtain an exact second-variation formula for
		each Ky Fan deficit and use its transverse positivity to prove local
		validity of Bourin's inequality.  In a second higher-dimensional
		regime, when $A$ and $B$ have a common two-point spectrum, we prove the
		sharp Loewner refinement
		\[
		|C_t(A,B)|+\gamma_t(A-B)^2\preceq A+B
		\]
		and determine the largest admissible coefficient $\gamma_t$.
		Accordingly, the paper identifies both the precise dimensional limit
		of the ordinary-Heinz route and two mechanisms by which direct
		Bourin-type domination survives beyond that obstruction.
		
	\end{abstract}
	
	\section{Introduction}
	
	Let $M_n=M_n(\mathbb C)$ and let $A,B\in M_n$ be positive definite.
	For $0<t<1$, set
	\begin{equation}\label{eq:C-def}
		C_t(A,B)=A^tB^{1-t}+B^tA^{1-t}.
	\end{equation}
	Bourin's question \cite{Bourin} asks whether
	\begin{equation}\label{eq:Bourin}
		\UIN{C_t(A,B)}\le \UIN{A+B}
	\end{equation}
	holds for every unitarily invariant norm.  Bourin originally stated
	the problem in the equivalent homogeneous form
	\[
	\UIN{A^pB^q+B^pA^q}
	\le
	\UIN{A^{p+q}+B^{p+q}},
	\qquad p,q\ge0.
	\]
	When $p+q>0$, one obtains \eqref{eq:Bourin} by taking
	$t=p/(p+q)$ and replacing the matrices by suitable powers.  Also,
	\[
	C_{1-t}(A,B)=C_t(A,B)^*,
	\]
	so the problem is symmetric about $t=1/2$.
	
	The question is particularly natural because of its proximity to the
	classical Heinz inequality.  Introduce
	\begin{equation}\label{eq:H-def}
		H_t(A,B)=A^tB^{1-t}+A^{1-t}B^t.
	\end{equation}
	For every unitarily invariant norm,
	\begin{equation}\label{eq:Heinz}
		\UIN{H_t(A,B)}\le \UIN{A+B},
		\qquad 0\le t\le1;
	\end{equation}
	see, for example, \cite{Bhatia,BhatiaDavis}.  Thus the seemingly most
	natural route to Bourin's inequality is the stronger comparison
	\begin{equation}\label{eq:strong-question}
		\UIN{C_t(A,B)}\le \UIN{H_t(A,B)}.
	\end{equation}
	If \eqref{eq:strong-question} were true, then \eqref{eq:Bourin} would
	follow immediately from \eqref{eq:Heinz}.
	
	This stronger route was investigated in connection with work of
	Hayajneh and Kittaneh \cite{HKLieb} and Bhatia \cite{BhatiaTrace}.
	For the Hilbert--Schmidt norm, Bhatia proved the comparison on the
	central interval $[1/4,3/4]$.  Bottazzi, Elencwajg, Larotonda, and
	Varela \cite{BELV} developed the connection further, but also found a
	decisive obstruction: their Example~3.1 gives positive definite
	$3\times3$ matrices for which
	\[
	\|C_t(A,B)\|_\infty>\|H_t(A,B)\|_\infty
	\]
	for a parameter $t$ outside the central interval.  Thus the ordinary
	Heinz expression cannot dominate the crossed one in general.  Once a
	$3\times3$ counterexample is available, direct sums with a sufficiently
	small positive scalar block propagate the failure to every dimension
	$n\ge3$.
	
	It is important to separate this obstruction from Bourin's original
	question.  The failure of \eqref{eq:strong-question} says that the
	ordinary Heinz mean cannot serve as a universal intermediate bound; it
	does not provide a counterexample to \eqref{eq:Bourin}.  Indeed,
	substantial positive information is known for the original problem.
	The trace-norm inequality holds throughout the full parameter range,
	and a series of papers by Hayajneh, Hayajneh, Kittaneh, and
	collaborators obtained endpoint results, quantitative estimates, and
	further special cases; see \cite{HHKII,HHKIII,HHKRemarks,HHKResults}.
	Darweesh, Hayajneh, Hayajneh, and Kittaneh \cite{DHHK} obtained
	additional estimates under spectral comparability assumptions.
	Kittaneh and Ricard \cite{KR} then proved the constant-one inequality
	\begin{equation}\label{eq:central-known}
		\UIN{C_t(A,B)}\le\UIN{A+B},
		\qquad \frac14\le t\le\frac34,
	\end{equation}
	for arbitrary positive matrices and arbitrary unitarily invariant
	norms.  Related extensions to noncommutative symmetric spaces have
	also appeared; see \cite{LHZ,Zhang}.
	
	This history suggests two different questions.  First, if the Heinz
	route fails from dimension three onward, is dimension two exactly the
	last dimension in which that stronger mechanism survives?  Second,
	once the Heinz route is unavailable, can one still obtain direct
	Bourin-type domination from other structural features of the pair
	$(A,B)$?  These are the two themes of the present paper.
	
	Our first result answers the dimensional question sharply.  For every
	pair of positive $2\times2$ matrices and every $0<t<1$, we prove
	\begin{equation}\label{eq:intro-major}
		s(C_t(A,B))\prec_w s(H_t(A,B)).
	\end{equation}
	Consequently,
	\[
	\UIN{C_t(A,B)}\le\UIN{H_t(A,B)}\le\UIN{A+B}
	\]
	for every unitarily invariant norm.  Combining this with the
	$3\times3$ counterexample of \cite{BELV} and a block-diagonal embedding
	shows that the universal crossed-Heinz/Heinz comparison has the exact
	dimensional threshold
	\[
	n\le2.
	\]
	To the best of our knowledge, the weak-majorization statement
	\eqref{eq:intro-major}, and hence this sharp dimensional formulation,
	has not previously been recorded.  The proof is genuinely
	two-dimensional: after a simultaneous unitary reduction, an exact
	formula for the two singular values shows that replacing one Heinz
	summand by its transpose leaves one Euclidean component unchanged and
	can only decrease the other.
	
	Having located the obstruction to the Heinz route, we then return to
	Bourin's original inequality itself and deliberately avoid comparison
	with $H_t$.  Two higher-dimensional mechanisms emerge.
	
	First, we study the problem near the diagonal $A=B$.  For a positive
	definite matrix $A$ with simple spectrum, we compute the exact
	second-order expansion of every Ky Fan deficit
	\[
	\KF{A+B}{k}-\KF{C_t(A,B)}{k}.
	\]
	The resulting Hessian is positive but has nontrivial null directions.
	We identify those directions with an exact zero-deficit manifold and
	prove uniform positivity in the transverse directions.  This yields a
	local theorem for all Ky Fan norms, and therefore for every unitarily
	invariant norm.
	
	Second, we consider a global spectral class in arbitrary dimension.
	When
	\[
	A=bI+(a-b)P,
	\qquad
	B=bI+(a-b)Q,
	\]
	with $P,Q$ orthogonal projections, the crossed-Heinz defect collapses
	to the algebra generated by $P+Q$.  This reduction yields the stronger
	Loewner estimate
	\[
	|C_t(A,B)|+\gamma_t(a,b)(A-B)^2\preceq A+B,
	\]
	where $\gamma_t(a,b)>0$ is explicit and optimal over all projection
	pairs.  Thus, even though the universal Heinz comparison breaks down
	from dimension three onward, direct Bourin domination persists in a
	nontrivial higher-dimensional class and admits a quantitative defect.
	
	The contribution of the paper can therefore be summarized as follows.
	\begin{enumerate}[label=\textup{(\roman*)},leftmargin=2.2em]
		\item
		We prove the weak majorization \eqref{eq:intro-major} for all positive
		$2\times2$ matrices.  Together with the known $3\times3$ counterexample,
		this identifies $n=2$ as the exact upper dimension for universal
		validity of the stronger crossed-Heinz/Heinz comparison.
		
		\item
		In arbitrary dimension, we derive an exact second-variation formula
		for every Ky Fan deficit near a simple-spectrum positive matrix and use
		the geometry of its null space to establish local validity of Bourin's
		inequality.
		
		\item
		For matrices with a common two-point spectrum, we prove a global sharp
		Loewner refinement of Bourin's inequality and determine its optimal
		coefficient.
	\end{enumerate}
	
	The three results fit the same progression.  The first determines
	precisely where the most natural Heinz-based proof strategy stops.  The
	second and third then show that the original Bourin inequality can
	survive beyond that obstruction for reasons that are not inherited
	from the ordinary Heinz mean: local Ky Fan curvature in one case and
	projection algebra in the other.
	
	\section{Notation and preliminary facts}
	
	For $X\in M_n$, let
	\[
	|X|=(X^*X)^{1/2},
	\]
	and write
	\[
	s_1(X)\ge\cdots\ge s_n(X)\ge0
	\]
	for the singular values of $X$.  The $k$-th Ky Fan norm is
	\[
	\KF{X}{k}=\sum_{j=1}^k s_j(X),
	\qquad 1\le k\le n.
	\]
	For nonnegative vectors $x,y\in\mathbb R^n$, the notation
	$x\prec_w y$ denotes weak majorization:
	\[
	\sum_{j=1}^k x_j^\downarrow
	\le
	\sum_{j=1}^k y_j^\downarrow,
	\qquad 1\le k\le n.
	\]
	The Ky Fan dominance theorem \cite{Bhatia} gives
	\begin{equation}\label{eq:KF-dominance}
		s(X)\prec_w s(Y)
		\quad\Longleftrightarrow\quad
		\UIN{X}\le\UIN{Y}
	\end{equation}
	for every unitarily invariant norm.
	
	We shall use repeatedly the elementary identity
	\begin{equation}\label{eq:defect}
		A+B-C_t(A,B)
		=
		(A^t-B^t)(A^{1-t}-B^{1-t}).
	\end{equation}
	The right-hand side need not be Hermitian, so \eqref{eq:defect} does
	not itself imply an order inequality.
	
	We also record the trace-norm case.
	
	\begin{proposition}\label{prop:trace}
		For all positive definite matrices $A,B$ and every $0<t<1$,
		\begin{equation}\label{eq:trace-general}
			\|C_t(A,B)\|_1\le \Tr A+\Tr B.
		\end{equation}
	\end{proposition}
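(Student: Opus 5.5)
The plan is to bound each summand of $C_t(A,B)$ separately in the trace norm and then apply a Hölder-type inequality. By the triangle inequality,
\[
\|C_t(A,B)\|_1\le \|A^tB^{1-t}\|_1+\|B^tA^{1-t}\|_1 ,
\]
so it suffices to control a single term of the form $\|X^tY^{1-t}\|_1$ for positive definite $X,Y$. The tool is the matrix Hölder inequality for unitarily invariant norms,
\[
\|ST\|_1\le \|S\|_p\,\|T\|_q,\qquad \tfrac1p+\tfrac1q=1,
\]
used with $S=A^t$, $T=B^{1-t}$, $p=1/t$ and $q=1/(1-t)$. Since $\|A^t\|_{1/t}=(\Tr A)^t$ and $\|B^{1-t}\|_{1/(1-t)}=(\Tr B)^{1-t}$, this gives
\[
\|A^tB^{1-t}\|_1\le (\Tr A)^t(\Tr B)^{1-t}.
\]

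The second summand is handled by exchanging the roles of $A$ and $B$:
\[
\|B^tA^{1-t}\|_1\le (\Tr B)^t(\Tr A)^{1-t}.
\]
Set $a=\Tr A$ and $b=\Tr B$, both positive. It then remains to verify the scalar inequality
\[
a^tb^{1-t}+a^{1-t}b^t\le a+b .
\]
This is the scalar Heinz inequality. It can be seen from the convexity of $s\mapsto a^sb^{1-s}+a^{1-s}b^s$ on $[0,1]$: the function equals $a+b$ at $s=0$ and at $s=1$, so it lies below that value in between. Alternatively, it follows from $(a^t-b^t)(a^{1-t}-b^{1-t})\ge0$, since both factors have the same sign. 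This is the scalar shadow of the identity \eqref{eq:defect}.

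No step is a real obstacle. The only point needing care is to justify the Hölder inequality with non-integer exponents $p=1/t$ and $q=1/(1-t)$. This is standard, and follows for instance from Horn's log-majorization $s(ST)\prec_{w\log} s(S)s(T)$ combined with the classical Hölder inequality for sums. A more naive route would try to use $\Tr$ of a positive part, but $C_t(A,B)$ is not Hermitian in general. The triangle-inequality-plus-Hölder route avoids that issue entirely, which is why I would follow it.
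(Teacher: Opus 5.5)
Your proof is correct and follows the paper's argument: the triangle inequality, then Schatten H\"older with exponents $1/t$ and $1/(1-t)$ to get $(\Tr A)^t(\Tr B)^{1-t}$ and its mirror bound, then a scalar inequality. The only cosmetic difference is the last step: the paper uses scalar Young's inequality, while you use the factorization $(a^t-b^t)(a^{1-t}-b^{1-t})\ge0$ (or convexity), which is equally valid.
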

	
	\begin{proof}
		Schatten H\"older gives
		\[
		\|A^tB^{1-t}\|_1
		\le
		(\Tr A)^t(\Tr B)^{1-t},
		\]
		and similarly
		\[
		\|B^tA^{1-t}\|_1
		\le
		(\Tr B)^t(\Tr A)^{1-t}.
		\]
		Hence
		\[
		\|C_t(A,B)\|_1
		\le
		(\Tr A)^t(\Tr B)^{1-t}
		+(\Tr B)^t(\Tr A)^{1-t}.
		\]
		Scalar Young's inequality gives the result.
	\end{proof}
	
	\section{The two-dimensional threshold for the Heinz comparison}
	
	We now turn to the main dimensional result.  Although comparison with
	the ordinary Heinz expression fails in general dimension, we shall show
	that it holds for every unitarily invariant norm in dimension two.
	The proof rests on two elementary but specifically two-dimensional
	features: a simultaneous real reduction and an exact representation of
	the two singular values of a real $2\times2$ matrix.
	
	\subsection{A real reduction}
	
	The first ingredient is a phase reduction peculiar to dimension two:
	after diagonalizing one Hermitian matrix, the single off-diagonal phase
	of the other can be removed without disturbing the diagonal form.
	
	\begin{lemma}\label{lem:real-reduction}
		Let $A,B\in M_2$ be Hermitian.  There exists a unitary matrix $U$ such
		that $U^*AU$ is diagonal and $U^*BU$ is real symmetric with
		nonnegative off-diagonal entry.  If $A,B\ge0$, then the transformed
		matrices remain positive.
	\end{lemma}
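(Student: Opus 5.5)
First diagonalize $A$ by the spectral theorem: choose a unitary $V$ with $V^*AV=\operatorname{diag}(\alpha_1,\alpha_2)$, $\alpha_i\in\mathbb R$. Write
\[
V^*BV=\begin{pmatrix}\beta_1 & z\\ \bar z & \beta_2\end{pmatrix},
\qquad \beta_1,\beta_2\in\mathbb R,\ z\in\mathbb C,
\]
which is possible because $V^*BV$ is Hermitian. The remaining freedom is conjugation by a diagonal unitary, which commutes with diagonal matrices and so preserves the diagonal form of $V^*AV$.

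Next, write $z=|z|e^{i\theta}$; when $z=0$, take $\theta=0$. Set $D=\operatorname{diag}(1,e^{i\theta})$ and $U=VD$. Then $U^*AU=D^*\operatorname{diag}(\alpha_1,\alpha_2)D=\operatorname{diag}(\alpha_1,\alpha_2)$. A direct computation gives
\[
U^*BU=D^*(V^*BV)D=\begin{pmatrix}\beta_1 & z e^{i\theta}\cdot\bar{1}\end{pmatrix}\!,
\]
more precisely the $(1,2)$ entry becomes $\overline{1}\cdot z\cdot e^{i\theta}$. Let me fix the phase convention so that this entry comes out as $|z|$. Since the $(1,2)$ entry of $D^*MD$ is $\bar d_1 m_{12} d_2$, choosing $d_1=1$ and $d_2=e^{-i\theta}$ gives $z e^{-i\theta}=|z|\ge0$. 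The $(2,1)$ entry is then $|z|$ as well. Hence $U^*BU$ is real symmetric with nonnegative off-diagonal entry, and $U^*AU$ is real diagonal.

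Finally, unitary congruence preserves positivity, since $\langle U^*XUx,x\rangle=\langle XUx,Ux\rangle\ge0$. So if $A,B\ge0$, the transformed matrices are positive, and the same argument shows that positive definiteness is preserved too.

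There is no real obstacle here. The only point requiring care is the sign of the phase in $D$, which the entry formula $\bar d_1 m_{12} d_2$ settles.
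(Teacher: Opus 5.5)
Your proof is correct and follows the paper's argument: diagonalize $A$, then conjugate by the diagonal unitary $D=\operatorname{diag}(1,e^{-i\theta})$. This keeps $V^*AV$ diagonal, turns the off-diagonal entry $|z|e^{i\theta}$ of $V^*BV$ into $|z|$, and positivity is preserved under unitary similarity. In the final write-up, remove the false start with $D=\operatorname{diag}(1,e^{i\theta})$, which would give $|z|e^{2i\theta}$, and the malformed display; keep only the corrected choice $d_2=e^{-i\theta}$.
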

	
	\begin{proof}
		By the spectral theorem, there exists a unitary matrix $V$ such that
		\[
		V^*AV=
		\begin{pmatrix}
			a&0\\
			0&b
		\end{pmatrix},
		\qquad a,b\in\mathbb R.
		\]
		Since $B$ is Hermitian, in the same basis we may write
		\[
		V^*BV=
		\begin{pmatrix}
			c&re^{i\theta}\\
			re^{-i\theta}&d
		\end{pmatrix},
		\qquad c,d\in\mathbb R,\quad r\ge0,
		\]
		where, if $r=0$, the value of $\theta$ is immaterial.  Set
		\[
		D=\operatorname{diag}(1,e^{-i\theta}).
		\]
		Since $D$ is diagonal, it commutes with $V^*AV$, and hence
		\[
		D^*(V^*AV)D=V^*AV.
		\]
		On the other hand,
		\[
		D^*(V^*BV)D
		=
		\begin{pmatrix}
			c&r\\
			r&d
		\end{pmatrix}.
		\]
		Thus, with $U=VD$, the matrix $U^*AU$ is diagonal, while $U^*BU$ is
		real symmetric with nonnegative off-diagonal entry.
		
		Finally, unitary similarity preserves positive semidefiniteness.
		Therefore, if $A,B\ge0$, then also $U^*AU,U^*BU\ge0$.
	\end{proof}
\subsection{A singular-value identity for real $2\times2$ matrices}

The next identity isolates the two combinations of matrix entries that
determine the singular values of a real $2\times2$ matrix.  In the proof
of the main theorem, one of these quantities will remain unchanged while
the other can only decrease.

\begin{lemma}\label{lem:Rpm}
	Let
	\[
	M=
	\begin{pmatrix}
		\alpha&\beta\\
		\gamma&\delta
	\end{pmatrix}
	\in M_2(\mathbb R),
	\]
	and define
	\begin{align}
		R_+(M)
		&=
		\sqrt{(\alpha+\delta)^2+(\beta-\gamma)^2},
		\label{eq:Rplus}\\
		R_-(M)
		&=
		\sqrt{(\alpha-\delta)^2+(\beta+\gamma)^2}.
		\label{eq:Rminus}
	\end{align}
	Then
	\begin{equation}\label{eq:R-sing}
		s_1(M)=\frac{R_+(M)+R_-(M)}2,
		\qquad
		s_2(M)=\frac{|R_+(M)-R_-(M)|}{2}.
	\end{equation}
	In particular,
	\begin{equation}\label{eq:R-trace}
		\|M\|_1=s_1(M)+s_2(M)
		=\max\{R_+(M),R_-(M)\}.
	\end{equation}
	More precisely,
	\begin{equation}\label{eq:R-sign}
		\|M\|_1=
		\begin{cases}
			R_+(M),&\det M\ge0,\\
			R_-(M),&\det M<0.
		\end{cases}
	\end{equation}
\end{lemma}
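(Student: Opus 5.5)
The plan is to read off the singular values from the two quantities $s_1^2+s_2^2$ and $s_1s_2$. For a real $2\times2$ matrix $M$ we have
\[
s_1(M)^2+s_2(M)^2=\|M\|_2^2=\alpha^2+\beta^2+\gamma^2+\delta^2,
\qquad
s_1(M)s_2(M)=|\det M|=|\alpha\delta-\beta\gamma|.
\]
First I will compute $R_\pm^2$ directly:
\[
R_+^2=\alpha^2+\delta^2+\beta^2+\gamma^2+2(\alpha\delta-\beta\gamma),
\qquad
R_-^2=\alpha^2+\delta^2+\beta^2+\gamma^2-2(\alpha\delta-\beta\gamma).
\]
This gives
\[
R_+^2+R_-^2=2\|M\|_2^2,
\qquad
R_+^2-R_-^2=4\det M.
\]
Alternatively, I can obtain the same identities by decomposing $M=\frac{\alpha+\delta}2 I+\frac{\beta-\gamma}2 J+N$, where $J$ is the rotation generator and $N$ is a symmetric traceless part of the form $\begin{pmatrix}x&y\\y&-x\end{pmatrix}$ with $x^2+y^2=R_-^2/4$. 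This exhibits $M$ as $\frac{R_+}{2}(\text{rotation})+\frac{R_-}{2}(\text{reflection})$. I will use this only as motivation.

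Next I will set $\sigma_1=(R_++R_-)/2$ and $\sigma_2=|R_+-R_-|/2$. Then
\[
\sigma_1^2+\sigma_2^2=\frac{R_+^2+R_-^2}{2}=\|M\|_2^2
\]
and
\[
\sigma_1\sigma_2=\frac{|R_+^2-R_-^2|}{4}=|\det M|.
\]
So $\{\sigma_1,\sigma_2\}$ and $\{s_1,s_2\}$ are the two nonnegative roots of the same system $u^2+v^2=p$, $uv=q$. This system determines the unordered pair, since $(u+v)^2$ and $(u-v)^2$ are both fixed and $u,v\ge0$. Because $\sigma_1\ge\sigma_2$, this proves \eqref{eq:R-sing}.

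For \eqref{eq:R-trace}, I will use that $s_1+s_2=\frac{R_++R_-+|R_+-R_-|}2=\max\{R_+,R_-\}$. Finally, \eqref{eq:R-sign} follows from $R_+^2-R_-^2=4\det M$: when $\det M\ge0$ we get $R_+\ge R_-$, and when $\det M<0$ we get $R_-> R_+$.

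There is no real obstacle here. The only point needing care is the uniqueness of the nonnegative ordered pair with prescribed sum of squares and product, which the $(u\pm v)^2$ argument settles.
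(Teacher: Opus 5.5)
Your proof is correct and is essentially the paper's argument. Both rest on $s_1^2+s_2^2=\|M\|_F^2$, $s_1s_2=|\det M|$ and $R_\pm^2=\|M\|_F^2\pm2\det M$; the paper matches $\{R_+,R_-\}$ with $\{s_1+s_2,\,s_1-s_2\}$ by a case split on the sign of $\det M$, while you check that your candidate pair satisfies the same two equations and invoke uniqueness, which is the same computation run in reverse.
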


\begin{proof}
	A direct expansion gives
	\begin{align*}
		R_+(M)^2
		&=
		(\alpha+\delta)^2+(\beta-\gamma)^2
		=\|M\|_F^2+2\det M,\\
		R_-(M)^2
		&=
		(\alpha-\delta)^2+(\beta+\gamma)^2
		=\|M\|_F^2-2\det M.
	\end{align*}
	In particular,
	\begin{equation}\label{eq:R-difference}
		R_+(M)^2-R_-(M)^2=4\det M.
	\end{equation}
	
	Let $s_1=s_1(M)\ge s_2=s_2(M)\ge0$.  Since
	\[
	s_1^2+s_2^2=\|M\|_F^2,
	\qquad
	s_1s_2=|\det M|,
	\]
	we obtain
	\begin{align*}
		(s_1+s_2)^2
		&=\|M\|_F^2+2|\det M|,\\
		(s_1-s_2)^2
		&=\|M\|_F^2-2|\det M|.
	\end{align*}
	
	If $\det M\ge0$, comparison with the formulas for $R_\pm(M)^2$
	shows, since all quantities are nonnegative, that
	\[
	R_+(M)=s_1+s_2,
	\qquad
	R_-(M)=s_1-s_2.
	\]
	If $\det M<0$, the roles are reversed:
	\[
	R_-(M)=s_1+s_2,
	\qquad
	R_+(M)=s_1-s_2.
	\]
	Thus in either case
	\[
	\{R_+(M),R_-(M)\}
	=
	\{s_1+s_2,\;s_1-s_2\}.
	\]
	Adding and subtracting these two quantities yields
	\[
	s_1=\frac{R_+(M)+R_-(M)}2,
	\qquad
	s_2=\frac{|R_+(M)-R_-(M)|}{2},
	\]
	which proves \eqref{eq:R-sing}.  Finally,
	\[
	\|M\|_1=s_1+s_2
	=\max\{R_+(M),R_-(M)\},
	\]
	and \eqref{eq:R-sign} follows directly from
	\eqref{eq:R-difference}.
\end{proof}
	
	\subsection{Weak majorization by the ordinary Heinz expression}
	
	We can now combine the phase reduction with the preceding
	singular-value identity.  The key point is that replacing the second
	Heinz summand by its transpose preserves one of the two quantities
	$R_\pm$, while positivity forces the other to decrease.
	
	\begin{theorem}[Two-dimensional weak majorization]\label{thm:2by2}
		Let $A,B\in M_2$ be positive definite and let $0<t<1$.  Then
		\begin{equation}\label{eq:major-2}
			s\!\left(C_t(A,B)\right)
			\prec_w
			s\!\left(H_t(A,B)\right).
		\end{equation}
		Equivalently, for every unitarily invariant norm,
		\begin{equation}\label{eq:strong-2}
			\UIN{A^tB^{1-t}+B^tA^{1-t}}
			\le
			\UIN{A^tB^{1-t}+A^{1-t}B^t}.
		\end{equation}
		Consequently,
		\begin{equation}\label{eq:Bourin2}
			\UIN{A^tB^{1-t}+B^tA^{1-t}}
			\le
			\UIN{A+B}
		\end{equation}
		for every unitarily invariant norm.
	\end{theorem}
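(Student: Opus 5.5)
The plan is to apply Lemma~\ref{lem:real-reduction} first.  Both sides of \eqref{eq:major-2} are invariant under a simultaneous unitary similarity $A\mapsto U^*AU$, $B\mapsto U^*BU$, since functional calculus commutes with it and singular values are unitarily invariant.  So I may assume $A=\operatorname{diag}(a,b)$ with $a,b>0$, and that $B$ is real symmetric positive definite with off-diagonal entry $r\ge0$.

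Then every power $A^s$, $B^s$ is real symmetric.  Set
\[
X=A^tB^{1-t},\qquad Y=A^{1-t}B^t .
\]
Transposing gives $Y^{T}=(B^t)^T(A^{1-t})^T=B^tA^{1-t}$.  Hence
\[
H_t(A,B)=X+Y,\qquad C_t(A,B)=X+Y^{T},
\]
and both are real $2\times2$ matrices, so Lemma~\ref{lem:Rpm} applies to each.

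For a real $2\times2$ matrix $M$, write $\tau(M)=\alpha+\delta$ and $\sigma(M)=\beta-\gamma$.  Write the remaining combinations as $(\alpha-\delta,\beta+\gamma)$.  Transposition keeps $\tau$, keeps $\alpha-\delta$ and keeps $\beta+\gamma$, but flips the sign of $\sigma$.  By linearity this gives
\[
R_-(X+Y^T)=R_-(X+Y),
\]
together with
\[
R_+(X+Y)^2=(\tau X+\tau Y)^2+(\sigma X+\sigma Y)^2,
\qquad
R_+(X+Y^T)^2=(\tau X+\tau Y)^2+(\sigma X-\sigma Y)^2 .
\]
Thus the whole comparison reduces to the sign condition $\sigma(X)\,\sigma(Y)\ge0$.

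To check this sign condition, I compute the off-diagonal entries directly.  Let $p$ and $q$ be the off-diagonal entries of $B^{1-t}$ and $B^t$.  Since $A^t$ is diagonal,
\[
\sigma(X)=a^tp-b^tp=p(a^t-b^t),\qquad \sigma(Y)=q(a^{1-t}-b^{1-t}).
\]
The factors $a^t-b^t$ and $a^{1-t}-b^{1-t}$ have the same sign, because both power functions are increasing.  For $p$ and $q$, I use the $2\times2$ functional calculus.  If $B$ has eigenvalues $\lambda_1\ne\lambda_2$ and $f$ is a function on $(0,\infty)$, then the off-diagonal entry of $f(B)$ is
\[
r\,\frac{f(\lambda_1)-f(\lambda_2)}{\lambda_1-\lambda_2},
\]
and it is $0$ if $\lambda_1=\lambda_2$.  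This holds because $f(B)$ is an affine function of $B$ whose slope is that divided difference.  Since $x^{s}$ is increasing for $s\in(0,1)$, we get $p,q\ge0$.  Hence $\sigma(X)\sigma(Y)\ge0$, and therefore $R_+(C_t)\le R_+(H_t)$ while $R_-(C_t)=R_-(H_t)$.

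By \eqref{eq:R-sing}, $s_1=(R_++R_-)/2$, so $s_1(C_t)\le s_1(H_t)$.  By \eqref{eq:R-trace}, $s_1+s_2=\max\{R_+,R_-\}$, so $s_1(C_t)+s_2(C_t)\le s_1(H_t)+s_2(H_t)$.  These two inequalities are exactly \eqref{eq:major-2}.  The Ky Fan dominance theorem \eqref{eq:KF-dominance} then gives \eqref{eq:strong-2}, and combining with the Heinz inequality \eqref{eq:Heinz} gives \eqref{eq:Bourin2}.

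I expect the main obstacle to be the sign step $\sigma(X)\sigma(Y)\ge0$.  This is where positivity of $A$ and $B$ and the monotonicity of fractional powers are used, and it requires the explicit divided-difference description of the off-diagonal entry of $f(B)$ in dimension two.  The degenerate cases $r=0$ and $a=b$ are harmless, since then both $\sigma$'s vanish or the relevant factor is zero.
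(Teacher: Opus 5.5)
Your proposal is correct and follows essentially the same route as the paper's proof. Both use the real reduction, write $C_t=X+Y^T$, note that $R_-$ is unchanged, and show that $R_+$ can only decrease because the skew off-diagonal parts $\sigma(X),\sigma(Y)$ are built from the nonnegative off-diagonal entries of $B^\alpha$ (via the affine representation $B^\alpha=u_\alpha B+v_\alpha I$ with $u_\alpha>0$). The only difference is cosmetic: the paper normalizes $a\ge b$ by swapping basis vectors, whereas you skip that step and observe directly that $a^t-b^t$ and $a^{1-t}-b^{1-t}$ have the same sign, so $\sigma(X)\sigma(Y)\ge 0$.
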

	
	\begin{proof}
		By Lemma~\ref{lem:real-reduction}, simultaneous unitary conjugation,
		and, if necessary, an interchange of the two basis vectors, we may
		assume
		\[
		A=
		\begin{pmatrix}
			a&0\\
			0&b
		\end{pmatrix},
		\qquad a\ge b>0,
		\]
		and
		\[
		B=
		\begin{pmatrix}
			c&r\\
			r&d
		\end{pmatrix},
		\qquad r\ge0,
		\]
		with $B>0$.  Since functional calculus commutes with unitary
		conjugation, this reduction does not affect the singular values of
		either $C_t(A,B)$ or $H_t(A,B)$.
		
		Set
		\begin{equation}\label{eq:XY}
			X=A^tB^{1-t},
			\qquad
			Y=A^{1-t}B^t.
		\end{equation}
		Then
		\[
		H_t(A,B)=X+Y.
		\]
		The matrices $A^{1-t}$ and $B^t$ are real symmetric, and hence
		\[
		Y^T=(A^{1-t}B^t)^T=B^tA^{1-t}.
		\]
		Therefore
		\begin{equation}\label{eq:CXY}
			C_t(A,B)=X+Y^T.
		\end{equation}
		In particular, both $C_t(A,B)$ and $H_t(A,B)$ are real matrices, so
		Lemma~\ref{lem:Rpm} applies.
		
		Define
		\[
		\Delta_X=X_{12}-X_{21},
		\qquad
		\Delta_Y=Y_{12}-Y_{21}.
		\]
		We first show that
		\begin{equation}\label{eq:Deltas}
			\Delta_X\ge0,
			\qquad
			\Delta_Y\ge0.
		\end{equation}
		For $0<\alpha<1$, spectral functional calculus in dimension two gives
		\begin{equation}\label{eq:Bpower-linear}
			B^\alpha=u_\alpha B+v_\alpha I
		\end{equation}
		for suitable real scalars $u_\alpha,v_\alpha$.  Indeed, if the
		eigenvalues of $B$ are distinct, say
		$\lambda_1>\lambda_2>0$, interpolation at these two eigenvalues gives
		\[
		u_\alpha
		=
		\frac{\lambda_1^\alpha-\lambda_2^\alpha}
		{\lambda_1-\lambda_2}
		>0.
		\]
		Consequently,
		\[
		(B^\alpha)_{12}
		=
		u_\alpha B_{12}
		=
		u_\alpha r
		\ge0.
		\]
		If the two eigenvalues coincide, then $B$ is a scalar matrix, so its
		off-diagonal entry, and hence that of $B^\alpha$, is zero.  Thus in all
		cases
		\begin{equation}\label{eq:Balpha12}
			(B^\alpha)_{12}\ge0.
		\end{equation}
		
		Since $B^\alpha$ is symmetric, we obtain
		\begin{align}
			\Delta_X
			&=
			(a^t-b^t)(B^{1-t})_{12}\ge0,
			\label{eq:DX}\\
			\Delta_Y
			&=
			(a^{1-t}-b^{1-t})(B^t)_{12}\ge0.
			\label{eq:DY}
		\end{align}
		
		We now compare the two quantities from Lemma~\ref{lem:Rpm}.
		Since $C_t=X+Y^T$ and $H_t=X+Y$, transposing $Y$ leaves its
		diagonal entries unchanged.  Hence
		\[
		(C_t)_{11}=(H_t)_{11},
		\qquad
		(C_t)_{22}=(H_t)_{22}.
		\]
		Moreover, transposition also preserves the sum of the off-diagonal
		entries:
		\[
		(C_t)_{12}+(C_t)_{21}
		=
		(H_t)_{12}+(H_t)_{21}.
		\]
		It follows directly from the definition of $R_-$ that
		\begin{equation}\label{eq:Rminus-same}
			R_-(C_t)=R_-(H_t).
		\end{equation}
		
		For the skew off-diagonal component, however,
		\[
		(H_t)_{12}-(H_t)_{21}
		=
		\Delta_X+\Delta_Y,
		\]
		whereas
		\[
		(C_t)_{12}-(C_t)_{21}
		=
		\Delta_X-\Delta_Y.
		\]
		Since $\Delta_X,\Delta_Y\ge0$,
		\[
		|\Delta_X-\Delta_Y|
		\le
		\Delta_X+\Delta_Y.
		\]
		The diagonal entries of $C_t$ and $H_t$ are the same, and hence their
		traces are equal.  By the definition of $R_+$, we therefore obtain
		\begin{equation}\label{eq:Rplus-order}
			R_+(C_t)\le R_+(H_t).
		\end{equation}
		
		Combining \eqref{eq:Rminus-same} and \eqref{eq:Rplus-order} with
		Lemma~\ref{lem:Rpm} gives
		\[
		s_1(C_t)
		=
		\frac{R_+(C_t)+R_-(C_t)}2
		\le
		\frac{R_+(H_t)+R_-(H_t)}2
		=
		s_1(H_t).
		\]
		The trace-norm formula in Lemma~\ref{lem:Rpm} similarly yields
		\begin{align*}
			s_1(C_t)+s_2(C_t)
			&=
			\max\{R_+(C_t),R_-(C_t)\}\\
			&\le
			\max\{R_+(H_t),R_-(H_t)\}\\
			&=
			s_1(H_t)+s_2(H_t).
		\end{align*}
		These are precisely the two Ky Fan inequalities in dimension two.
		Hence
		\[
		s(C_t(A,B))\prec_w s(H_t(A,B)),
		\]
		which proves \eqref{eq:major-2}.  By the Ky Fan dominance theorem,
		this is equivalent to \eqref{eq:strong-2}.  Finally, the classical
		Heinz inequality \eqref{eq:Heinz} gives
		\[
		\UIN{C_t(A,B)}
		\le
		\UIN{H_t(A,B)}
		\le
		\UIN{A+B},
		\]
		and therefore \eqref{eq:Bourin2}.
	\end{proof}
	
	The positive-definite hypothesis in Theorem~\ref{thm:2by2} is only
	needed to carry out the argument without singular-power
	approximations.  Continuity extends the conclusion to the boundary of
	the positive cone.
	
	\begin{corollary}\label{cor:2PSD}
		The conclusions of Theorem~\ref{thm:2by2} remain valid for positive
		semidefinite $2\times2$ matrices.
	\end{corollary}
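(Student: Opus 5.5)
The plan is to obtain the positive semidefinite case from Theorem~\ref{thm:2by2} by approximation. Let $A,B\ge0$ be $2\times2$ and fix $0<t<1$. For $\varepsilon>0$ put $A_\varepsilon=A+\varepsilon I$ and $B_\varepsilon=B+\varepsilon I$. These are positive definite, so Theorem~\ref{thm:2by2} applies to them:
\[
s(C_t(A_\varepsilon,B_\varepsilon))\prec_w s(H_t(A_\varepsilon,B_\varepsilon)).
\]
In dimension two this means the two Ky Fan inequalities
\[
\KF{C_t(A_\varepsilon,B_\varepsilon)}{k}\le\KF{H_t(A_\varepsilon,B_\varepsilon)}{k},\qquad k=1,2.
\]

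Next we let $\varepsilon\to0^+$. For each exponent $\alpha\in[0,1]$ with $\alpha\notin\{0\}$, the map $X\mapsto X^\alpha$ is continuous on the closed positive cone. The reason is that $x\mapsto x^\alpha$ is continuous on $[0,\infty)$, and continuous functional calculus is continuous in the matrix argument; this can be justified by uniformly approximating $x^\alpha$ by polynomials on a compact interval containing all the spectra. Here both $t$ and $1-t$ lie in $(0,1)$, so each of the four factors $A_\varepsilon^t$, $B_\varepsilon^{1-t}$, $B_\varepsilon^t$, $A_\varepsilon^{1-t}$ converges to the corresponding power of $A$ or $B$.

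Since products and sums are continuous, $C_t(A_\varepsilon,B_\varepsilon)\to C_t(A,B)$ and $H_t(A_\varepsilon,B_\varepsilon)\to H_t(A,B)$. The Ky Fan norms are norms, hence continuous, so passing to the limit in the two inequalities gives the weak majorization \eqref{eq:major-2} for $A,B\ge0$. The Ky Fan dominance theorem \eqref{eq:KF-dominance} then yields \eqref{eq:strong-2}. Inequality \eqref{eq:Bourin2} follows either from the classical Heinz inequality \eqref{eq:Heinz}, which is valid for positive semidefinite matrices, or directly by the same limiting argument, using $A_\varepsilon+B_\varepsilon\to A+B$.

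The only point needing care is the continuity of $X\mapsto X^\alpha$ at singular $X$, where the function is not differentiable. Uniform continuity via polynomial (Weierstrass) approximation on $[0,\max\|A_\varepsilon\|]$ handles this, so I expect no real obstacle.
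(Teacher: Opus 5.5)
Your proposal is correct and follows the paper's proof essentially step for step. You perturb to $A+\varepsilon I$, $B+\varepsilon I$, apply Theorem~\ref{thm:2by2}, and pass to the limit in the two Ky Fan inequalities using continuity of the functional calculus for $x^t$, $x^{1-t}$ on $[0,\infty)$. The remaining conclusions then follow from Ky Fan dominance and the Heinz inequality.
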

	
	\begin{proof}
		For $\varepsilon>0$, set
		\[
		A_\varepsilon=A+\varepsilon I,
		\qquad
		B_\varepsilon=B+\varepsilon I.
		\]
		Then $A_\varepsilon,B_\varepsilon>0$, so
		Theorem~\ref{thm:2by2} gives
		\[
		s\!\left(C_t(A_\varepsilon,B_\varepsilon)\right)
		\prec_w
		s\!\left(H_t(A_\varepsilon,B_\varepsilon)\right).
		\]
		As $\varepsilon\downarrow0$, continuity of the functional calculus
		for the functions $x^t$ and $x^{1-t}$ on $[0,\infty)$ gives
		\[
		C_t(A_\varepsilon,B_\varepsilon)\longrightarrow C_t(A,B),
		\qquad
		H_t(A_\varepsilon,B_\varepsilon)\longrightarrow H_t(A,B).
		\]
		Since singular values are continuous, the two Ky Fan inequalities
		pass to the limit.  Hence
		\[
		s(C_t(A,B))\prec_w s(H_t(A,B)),
		\]
		and the remaining conclusions follow by Ky Fan dominance and the
		Heinz inequality.
	\end{proof}
	
	\begin{remark}\label{rem:dimension-specific}
		Theorem~\ref{thm:2by2} is strictly stronger than the dimension-two
		instance of Bourin's inequality: it places the ordinary Heinz
		expression itself between the crossed-Heinz expression and $A+B$,
		in the sense that
		\[
		\UIN{C_t(A,B)}
		\le
		\UIN{H_t(A,B)}
		\le
		\UIN{A+B}
		\]
		for every unitarily invariant norm.  The first inequality is known to
		fail already for positive definite $3\times3$ matrices
		\cite{BELV}.  Thus the theorem reveals a genuinely two-dimensional
		rigidity phenomenon; the next result shows that dimension two is in
		fact the exact threshold for this stronger comparison.
	\end{remark}
	
The preceding theorem is not merely a low-dimensional special case:
together with the known $3\times3$ counterexample, it identifies
dimension two as the exact threshold for universal domination by the
ordinary Heinz expression.

\begin{corollary}[Sharp dimensional threshold]
	\label{cor:dimension-threshold}
	Let $n\ge1$.  The inequality
	\begin{equation}\label{eq:dimension-comparison}
		\UIN{C_t(A,B)}
		\le
		\UIN{H_t(A,B)}
	\end{equation}
	holds for every pair of positive semidefinite matrices
	$A,B\in M_n$, every $0<t<1$, and every unitarily invariant norm
	if and only if
	\[
	n\le2.
	\]
	Moreover, for every $n\ge3$ the failure can be realized by positive
	definite matrices and the operator norm.
\end{corollary}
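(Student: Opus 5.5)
The plan is to prove the two directions separately.

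\emph{The case $n\le2$.} Here the inequality is already available. For $n=2$ it is exactly Corollary~\ref{cor:2PSD}, which extends Theorem~\ref{thm:2by2} to positive semidefinite matrices. For $n=1$ the matrices are nonnegative scalars $a,b$. Then
\[
C_t(a,b)=H_t(a,b)=a^tb^{1-t}+b^ta^{1-t},
\]
so \eqref{eq:dimension-comparison} holds with equality.

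\emph{Failure for $n\ge3$.} For $n=3$ I would invoke Example~3.1 of \cite{BELV}. It supplies positive definite $A_0,B_0\in M_3$ and some $t\in(0,1)$ with
\[
\|C_t(A_0,B_0)\|_\infty>\|H_t(A_0,B_0)\|_\infty .
\]
For $n>3$, set $m=n-3$ and fix $\varepsilon>0$. Define
\[
A=A_0\oplus\varepsilon I_m,\qquad B=B_0\oplus\varepsilon I_m .
\]
Both are positive definite, and functional calculus respects direct sums. Hence
\[
C_t(A,B)=C_t(A_0,B_0)\oplus 2\varepsilon I_m,\qquad
H_t(A,B)=H_t(A_0,B_0)\oplus 2\varepsilon I_m .
\]
The operator norm of a direct sum is the maximum of the norms of its blocks. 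Choose $\varepsilon$ so that
\[
2\varepsilon<\min\bigl\{\|C_t(A_0,B_0)\|_\infty,\ \|H_t(A_0,B_0)\|_\infty\bigr\}.
\]
This minimum is positive, since $H_t(A_0,B_0)$ is a product-sum of invertible matrices and is nonzero; for instance its trace equals $2\Tr(A_0^tB_0^{1-t})>0$. With this choice both operator norms are determined by the $3\times3$ block, so
\[
\|C_t(A,B)\|_\infty=\|C_t(A_0,B_0)\|_\infty>\|H_t(A_0,B_0)\|_\infty=\|H_t(A,B)\|_\infty .
\]
Thus the inequality fails in every dimension $n\ge3$, and the failure is realized by positive definite matrices and the operator norm, which is the "moreover" clause.

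\emph{Main obstacle.} The only external ingredient is the $3\times3$ counterexample of \cite{BELV}, which I take as given. If it had to be checked independently, I would verify the numerical strict inequality for their specific matrices and value of $t$. The remaining work is bookkeeping: the choice of $\varepsilon$ above, and noting that the "if" direction covers the semidefinite case through Corollary~\ref{cor:2PSD}.
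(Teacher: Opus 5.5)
Your proposal is correct and follows the paper's proof essentially verbatim: equality in the scalar case, Corollary~\ref{cor:2PSD} for $n=2$, the $3\times3$ example of \cite{BELV}, and a direct sum with a small scalar block $\varepsilon I_{n-3}$ to reach every $n>3$. One small slip: in general $\Tr H_t(A_0,B_0)=\Tr(A_0^tB_0^{1-t})+\Tr(A_0^{1-t}B_0^{t})$ is not $2\Tr(A_0^tB_0^{1-t})$, because the two terms usually differ. This is harmless, since each term is positive (e.g.\ $\Tr(A_0^tB_0^{1-t})=\Tr(B_0^{(1-t)/2}A_0^tB_0^{(1-t)/2})>0$), and in fact the argument only needs $2\varepsilon<\|C_t(A_0,B_0)\|_\infty$.
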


\begin{proof}
	For $n=1$, the two expressions coincide, so the assertion is
	immediate.  For $n=2$, the result follows from
	Corollary~\ref{cor:2PSD}.
	
	For $n=3$, Bottazzi, Elencwajg, Larotonda, and Varela
	\cite[Example~3.1]{BELV} exhibit positive definite matrices
	$A_0,B_0\in M_3$ and a parameter $t_0\in(0,1)$ such that
	\begin{equation}\label{eq:BELV-counterexample}
		\|C_{t_0}(A_0,B_0)\|_\infty
		>
		\|H_{t_0}(A_0,B_0)\|_\infty.
	\end{equation}
	
	It remains to propagate this counterexample to every dimension
	$n>3$.  Choose $\varepsilon>0$ such that
	\[
	2\varepsilon
	<
	\|H_{t_0}(A_0,B_0)\|_\infty
	\]
	and set
	\[
	A=A_0\oplus\varepsilon I_{n-3},
	\qquad
	B=B_0\oplus\varepsilon I_{n-3}.
	\]
	Then $A,B>0$.  Since functional calculus respects direct sums,
	\[
	A^{t_0}
	=
	A_0^{t_0}\oplus\varepsilon^{t_0}I_{n-3},
	\qquad
	B^{1-t_0}
	=
	B_0^{1-t_0}\oplus\varepsilon^{1-t_0}I_{n-3},
	\]
	and similarly for the complementary powers.  Consequently,
	\begin{align*}
		C_{t_0}(A,B)
		&=
		C_{t_0}(A_0,B_0)\oplus2\varepsilon I_{n-3},\\
		H_{t_0}(A,B)
		&=
		H_{t_0}(A_0,B_0)\oplus2\varepsilon I_{n-3}.
	\end{align*}
	For the operator norm,
	\[
	\|X\oplus2\varepsilon I_{n-3}\|_\infty
	=
	\max\{\|X\|_\infty,2\varepsilon\}.
	\]
	By \eqref{eq:BELV-counterexample} and the choice of $\varepsilon$,
	\[
	2\varepsilon
	<
	\|H_{t_0}(A_0,B_0)\|_\infty
	<
	\|C_{t_0}(A_0,B_0)\|_\infty.
	\]
	Hence
	\begin{align*}
		\|C_{t_0}(A,B)\|_\infty
		&=
		\|C_{t_0}(A_0,B_0)\|_\infty,\\
		\|H_{t_0}(A,B)\|_\infty
		&=
		\|H_{t_0}(A_0,B_0)\|_\infty,
	\end{align*}
	and therefore
	\[
	\|C_{t_0}(A,B)\|_\infty
	>
	\|H_{t_0}(A,B)\|_\infty.
	\]
	Thus universal validity fails in every dimension $n\ge3$, completing
	the proof.
\end{proof}

\begin{remark}\label{rem:threshold-vs-Bourin}
	Corollary~\ref{cor:dimension-threshold} concerns the stronger
	comparison with the ordinary Heinz expression, not Bourin's original
	inequality \eqref{eq:Bourin}.  Thus the failure in dimensions
	$n\ge3$ rules out the ordinary Heinz expression as a universal
	intermediate bound, but it does not provide a counterexample to
	Bourin's inequality itself.  This distinction motivates the remainder
	of the paper: having determined the exact dimensional threshold for
	the stronger comparison, we now turn to higher-dimensional mechanisms
	that address Bourin's inequality directly.
\end{remark}
	
\section{Exact second variation of the Ky Fan deficits}

Having identified the obstruction to the Heinz-mediated approach in
dimensions $n\ge3$, we now return directly to Bourin's inequality in
arbitrary dimension.  The first question is whether the inequality is
locally stable near the equality configuration $A=B$.  The following
calculation shows that, for every Ky Fan norm, the deficit has a
nonnegative quadratic leading term whose coefficients can be computed
explicitly.

Fix $A>0$ with simple spectrum
\[
\lambda_1>\lambda_2>\cdots>\lambda_n>0,
\]
and work in an orthonormal eigenbasis of $A$.  For $1\le k\le n$, let
$P_k$ denote the spectral projection onto the span of the first $k$
eigenvectors.  For a sufficiently small Hermitian perturbation $E$,
define the $k$th Ky Fan deficit by
\begin{equation}\label{eq:F}
	F_{k,t}(E)
	=
	\KF{2A+E}{k}
	-
	\KF{C_t(A,A+E)}{k}.
\end{equation}
Here $2A+E=A+(A+E)$ is precisely the right-hand side appearing in
Bourin's inequality.

Define
\begin{equation}\label{eq:phi}
	\phi_t(x,y)
	=
	\begin{cases}
		\displaystyle
		\frac{(x^t-y^t)(x^{1-t}-y^{1-t})}{(x-y)^2},
		&x\ne y,\\[9pt]
		\displaystyle
		\frac{t(1-t)}{x},
		&x=y.
	\end{cases}
\end{equation}
Since both functions $x\mapsto x^t$ and $x\mapsto x^{1-t}$ are
strictly increasing on $(0,\infty)$, one has
\begin{equation}\label{eq:phi-positive}
	\phi_t(x,y)>0
	\qquad (x,y>0,\;0<t<1).
\end{equation}

\begin{theorem}[Exact second variation]\label{thm:variation}
	For every Hermitian $H$, every $1\le k\le n$, and every $0<t<1$,
	\begin{equation}\label{eq:variation}
		F_{k,t}(\varepsilon H)
		=
		\varepsilon^2
		\sum_{i=1}^k\sum_{j=1}^n
		\phi_t(\lambda_i,\lambda_j)|H_{ij}|^2
		+O(\varepsilon^3)
		\qquad (\varepsilon\to0).
	\end{equation}
	For fixed $A$, the remainder is uniform when $H$ ranges over a bounded
	set of Hermitian matrices and $t$ ranges over a compact subinterval of
	$(0,1)$.
\end{theorem}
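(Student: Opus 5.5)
The plan is to use the defect identity \eqref{eq:defect} to move all second-order information into one explicit product. Then $C_t(A,A+\varepsilon H)$ becomes a Hermitian positive definite matrix plus an $O(\varepsilon^2)$ non-Hermitian correction, and a first-order perturbation argument for Ky Fan norms finishes the proof.

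\emph{Step 1 (the defect term).} Put $B=A+\varepsilon H$. By \eqref{eq:defect},
\[
C_t(A,B)=K_\varepsilon-W_\varepsilon,
\qquad K_\varepsilon=2A+\varepsilon H,
\qquad W_\varepsilon=(B^t-A^t)(B^{1-t}-A^{1-t}).
\]
I will expand each factor with the Daleckii--Krein formula in the eigenbasis of $A$. For $f(x)=x^\alpha$ this gives
\[
B^\alpha-A^\alpha=\varepsilon\, f^{[1]}(\lambda_i,\lambda_j)H_{ij}+O(\varepsilon^2),
\]
entrywise, where $f^{[1]}$ is the divided difference. The remainder is uniform for $\|H\|$ bounded and for $\alpha$ in a compact subset of $(0,1)$, because the spectrum of $B$ stays in a fixed compact subset of $(0,\infty)$. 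Hence
\[
W_\varepsilon=\varepsilon^2 D+O(\varepsilon^3),
\qquad D_{ii}=\sum_j (x^t)^{[1]}(\lambda_i,\lambda_j)\,(x^{1-t})^{[1]}(\lambda_j,\lambda_i)\,H_{ij}H_{ji}.
\]
Since divided differences are symmetric and $H_{ji}=\overline{H_{ij}}$, this reads
\[
D_{ii}=\sum_{j=1}^n \phi_t(\lambda_i,\lambda_j)|H_{ij}|^2 .
\]
The diagonal case $j=i$ gives $t(1-t)/\lambda_i$, consistent with \eqref{eq:phi}. The right-hand side of \eqref{eq:variation} is exactly $\varepsilon^2\sum_{i\le k}D_{ii}$.

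\emph{Step 2 (Ky Fan norm of $K_\varepsilon$).} For small $\varepsilon$, $K_\varepsilon>0$ and its eigenvalues lie within $O(\varepsilon)$ of $2\lambda_i$, so they stay simple with a uniform gap. Therefore
\[
\KF{2A+\varepsilon H}{k}=\Tr(P^{(\varepsilon)}K_\varepsilon),
\]
where $P^{(\varepsilon)}$ is the spectral projection of $K_\varepsilon$ onto its top $k$ eigenvalues, and $P^{(\varepsilon)}=P_k+O(\varepsilon)$.

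\emph{Step 3 (the main obstacle: singular values of a non-Hermitian perturbation).} I need
\[
\KF{K-W}{k}=\KF{K}{k}-\Ree\Tr(P^{(\varepsilon)}W)+O(\|W\|^2),
\]
for $K=K_\varepsilon$ positive definite with a uniform gap and $W=W_\varepsilon$ arbitrary and small.

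For the lower bound, use the variational characterization $\KF{X}{k}\ge\Ree\Tr(VX)$ for every rank-$k$ partial isometry $V$, and take $V=P^{(\varepsilon)}$.

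For the upper bound, write
\[
|K-W|=\bigl(K^2-KW-W^*K+W^*W\bigr)^{1/2}=K-L+O(\|W\|^2),
\]
where $L$ solves the Sylvester equation $KL+LK=KW+W^*K$. Solving it entrywise in the eigenbasis of $K$ shows that the diagonal entries of $L$ equal $\Ree W_{ii}$. The sum of the top $k$ eigenvalues of $|K-W|$ differs from $\Tr(P^{(\varepsilon)}|K-W|)$ only at second order, because the gap is preserved. Hence
\[
\KF{K-W}{k}=\Tr(P^{(\varepsilon)}K)-\Ree\Tr(P^{(\varepsilon)}W)+O(\|W\|^2).
\]
The uniform gap controls all constants. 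This square-root perturbation step is the most delicate part, and I would carry it out carefully through the resolvent (Riesz projection) of $|X|^2$.

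\emph{Step 4 (assembly).} With $\|W_\varepsilon\|=O(\varepsilon^2)$ the error term is $O(\varepsilon^4)$. Also
\[
\Ree\Tr(P^{(\varepsilon)}W_\varepsilon)=\varepsilon^2\Ree\Tr(P_kD)+O(\varepsilon^3)=\varepsilon^2\sum_{i\le k}D_{ii}+O(\varepsilon^3),
\]
since the $D_{ii}$ are real. Subtracting this from $\KF{2A+\varepsilon H}{k}$ gives \eqref{eq:variation}. Every constant depends only on $A$, on the bound for $\|H\|$, and on the compact $t$-interval, which gives the stated uniformity of the remainder.
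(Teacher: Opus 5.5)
Your proposal is correct and follows essentially the same route as the paper. The defect identity, combined with a Daleckii--Krein expansion of both factors, gives $C_t(A,A+\varepsilon H)=2A+\varepsilon H-\varepsilon^2L_t(H)L_{1-t}(H)+O(\varepsilon^3)$. A first-order expansion of the Ky Fan norm with derivative $Z\mapsto\Ree\Tr(P_kZ)$ then yields the coefficient $\sum_{i\le k}\sum_{j}\phi_t(\lambda_i,\lambda_j)|H_{ij}|^2$.

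The only difference is one of detail. Your Step~3 actually proves, via $|X|=(X^*X)^{1/2}$ and the Sylvester equation, the smoothness and derivative formula for the Ky Fan norm under non-Hermitian perturbations, which the paper simply asserts. Your replacement of $P^{(\varepsilon)}$ by $P_k+O(\varepsilon)$ is exactly the paper's step $Dg_k(2A+\varepsilon H)=Dg_k(2A)+O(\varepsilon)$.
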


\begin{proof}
	For $0<t<1$, let
	\[
	f_t(x)=x^t,
	\qquad
	L_t=Df_t(A).
	\]
	The divided-difference formula for the Fr\'echet derivative of the
	functional calculus gives
	\begin{equation}\label{eq:Frechet}
		(L_t(H))_{ij}
		=
		f_t^{[1]}(\lambda_i,\lambda_j)H_{ij},
	\end{equation}
	where
	\[
	f_t^{[1]}(x,y)
	=
	\begin{cases}
		\displaystyle\frac{x^t-y^t}{x-y},&x\ne y,\\[7pt]
		tx^{t-1},&x=y.
	\end{cases}
	\]
	
	We use the exact defect identity \eqref{eq:defect},
	\[
	A+B-C_t(A,B)
	=
	(A^t-B^t)(A^{1-t}-B^{1-t}),
	\]
	with $B=A+\varepsilon H$.  Fr\'echet differentiability gives
	\[
	A^t-(A+\varepsilon H)^t
	=
	-\varepsilon L_t(H)+O(\varepsilon^2)
	\]
	and
	\[
	A^{1-t}-(A+\varepsilon H)^{1-t}
	=
	-\varepsilon L_{1-t}(H)+O(\varepsilon^2).
	\]
	Multiplying these expansions yields
	\[
	(A^t-(A+\varepsilon H)^t)
	(A^{1-t}-(A+\varepsilon H)^{1-t})
	=
	\varepsilon^2L_t(H)L_{1-t}(H)
	+O(\varepsilon^3).
	\]
	Since $A+B=2A+\varepsilon H$, the defect identity therefore gives
	\begin{equation}\label{eq:Cexp}
		C_t(A,A+\varepsilon H)
		=
		2A+\varepsilon H
		-
		\varepsilon^2L_t(H)L_{1-t}(H)
		+O(\varepsilon^3).
	\end{equation}
	
	Now set
	\[
	g_k(X)=\KF{X}{k}.
	\]
	Because $2A$ is positive definite and has distinct singular values
	$2\lambda_1>\cdots>2\lambda_n>0$, the sum of its largest $k$
	singular values is smooth in a neighborhood of $2A$.  For $k<n$,
	this follows from the strict gap
	\[
	2\lambda_k>2\lambda_{k+1},
	\]
	while for $k=n$ the trace norm is smooth in a neighborhood of the
	invertible matrix $2A$.  Its first derivative at $2A$ is
	\begin{equation}\label{eq:dg}
		Dg_k(2A)[Z]
		=
		\Ree\Tr(P_kZ).
	\end{equation}
	
	For brevity, write
	\[
	Q_t(H)=L_t(H)L_{1-t}(H).
	\]
	By \eqref{eq:Cexp}, the two arguments of $g_k$ occurring in
	\eqref{eq:F} differ by
	\[
	\varepsilon^2Q_t(H)+O(\varepsilon^3).
	\]
	Since $Dg_k$ is continuously differentiable near $2A$,
	\[
	Dg_k(2A+\varepsilon H)
	=
	Dg_k(2A)+O(\varepsilon).
	\]
	Consequently, Taylor expansion gives
	\begin{align*}
		F_{k,t}(\varepsilon H)
		&=
		g_k(2A+\varepsilon H)
		-
		g_k\!\left(
		2A+\varepsilon H-\varepsilon^2Q_t(H)+O(\varepsilon^3)
		\right)\\
		&=
		\varepsilon^2Dg_k(2A)[Q_t(H)]
		+O(\varepsilon^3)\\
		&=
		\varepsilon^2
		\Ree\Tr\!\left(P_kL_t(H)L_{1-t}(H)\right)
		+O(\varepsilon^3).
	\end{align*}
	
	It remains to evaluate the coefficient.  Since the divided differences
	in \eqref{eq:Frechet} are real and symmetric in their two arguments,
	and $H_{ji}=\overline{H_{ij}}$, we obtain
	\begin{align*}
		\Ree\Tr\!\left(P_kL_t(H)L_{1-t}(H)\right)
		&=
		\sum_{i=1}^k\sum_{j=1}^n
		f_t^{[1]}(\lambda_i,\lambda_j)
		f_{1-t}^{[1]}(\lambda_i,\lambda_j)
		|H_{ij}|^2\\
		&=
		\sum_{i=1}^k\sum_{j=1}^n
		\phi_t(\lambda_i,\lambda_j)|H_{ij}|^2.
	\end{align*}
	Indeed, for $i\ne j$ the product of the two divided differences is
	exactly
	\[
	\frac{(\lambda_i^t-\lambda_j^t)
		(\lambda_i^{1-t}-\lambda_j^{1-t})}
	{(\lambda_i-\lambda_j)^2},
	\]
	whereas for $i=j$ it equals
	\[
	t\lambda_i^{t-1}(1-t)\lambda_i^{-t}
	=
	\frac{t(1-t)}{\lambda_i}.
	\]
	This proves \eqref{eq:variation}.
	
	For the uniformity assertion, let $J\Subset(0,1)$ be compact and
	choose a neighborhood $\mathcal U$ of $A$ whose closure consists of
	positive definite matrices.  The map
	\[
	(X,t)\longmapsto X^t
	\]
	is jointly smooth on $\overline{\mathcal U}\times J$, with the
	derivatives required above uniformly bounded there.  Moreover, after
	shrinking $\mathcal U$ if necessary, the singular-value gap defining
	the first $k$ singular values remains uniformly separated for
	$k<n$, while all matrices involved remain uniformly invertible for
	$k=n$.  Hence the relevant derivatives of $g_k$ are uniformly bounded
	as well.  It follows that the remainder in \eqref{eq:variation} is
	$O(\varepsilon^3)$ uniformly for $t\in J$ and for $H$ in bounded
	subsets of the Hermitian matrices.
\end{proof}

\begin{remark}\label{rem:variation-kernel}
	The quadratic form in Theorem~\ref{thm:variation} is positive
	semidefinite.  Since every coefficient $\phi_t(\lambda_i,\lambda_j)$
	is strictly positive, its null space is described explicitly by
	\[
	H_{ij}=0
	\qquad
	(1\le i\le k,\;1\le j\le n).
	\]
	For Hermitian $H$, this is equivalent to
	\[
	P_kH=HP_k=0,
	\]
	or, writing $Q_k=I-P_k$,
	\[
	H=Q_kHQ_k.
	\]
	Thus for $k<n$ the second variation has a nontrivial null space,
	whereas for $k=n$ it is positive definite.  Positivity of the second
	variation alone is therefore insufficient to establish the desired
	local inequality when $k<n$.  The next section shows that these null
	directions form an exact zero-deficit manifold and then controls the
	transverse directions uniformly.
\end{remark}
	
	\section{Local validity in arbitrary dimension}
	
	The null directions of the second variation are not an obstruction to
	local validity: they form an exact manifold on which the Ky Fan deficit
	vanishes identically, while the deficit grows quadratically in every
	transverse direction.  This yields a local form of Bourin's inequality
	in arbitrary dimension.
	
	Throughout this section,
	\[
	J\Subset(0,1)
	\]
	means that $J$ is a compact subinterval of $(0,1)$; equivalently, $J$
	stays a positive distance away from the endpoints $0$ and $1$.
	
	\begin{theorem}[Local validity]\label{thm:local}
		Let $A>0$ have simple spectrum and let $J\Subset(0,1)$.  There exist
		constants $r,c>0$, depending only on $A$ and $J$, such that for every
		Hermitian $E$ satisfying $\|E\|_F<r$, every $t\in J$, and every
		$1\le k\le n$,
		\begin{equation}\label{eq:local}
			A+E>0,
			\qquad
			F_{k,t}(E)\ge c\|P_kE\|_F^2.
		\end{equation}
		Consequently,
		\begin{equation}\label{eq:localUIN}
			\UIN{C_t(A,A+E)}
			\le
			\UIN{2A+E}
		\end{equation}
		for every unitarily invariant norm.
		
		Moreover, within this neighborhood,
		\[
		F_{k,t}(E)=0
		\quad\Longleftrightarrow\quad
		P_kE=0.
		\]
		For $k=n$, equality occurs only when $E=0$.
	\end{theorem}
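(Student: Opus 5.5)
The plan is to split each perturbation into a component along the null space of the second variation in Theorem~\ref{thm:variation} and a transverse component, and then to Taylor-expand in the transverse variable only. Fix $k$ and write $Q_k=I-P_k$. Decompose a Hermitian $E$ as $E=N+T$ with $N=Q_kEQ_k$ and $T=E-N=P_kE+Q_kEP_k$. Then
\[
\|P_kE\|_F^2\le\|T\|_F^2=\|P_kE\|_F^2+\|Q_kEP_k\|_F^2\le 2\|P_kE\|_F^2,
\]
so it suffices to prove $F_{k,t}(N+T)\ge c\|T\|_F^2$. Choose $r$ so small that $A+E>0$, that all matrices stay in a neighbourhood where $X\mapsto X^t$ is jointly smooth in $(X,t)\in\overline{\mathcal U}\times J$, and that the gap $2\lambda_k>2\lambda_{k+1}$ persists. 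With this choice $g_k=\KF{\cdot}{k}$ is smooth near every matrix that occurs, as in the proof of Theorem~\ref{thm:variation}. Set $G(N,T)=F_{k,t}(N+T)$, which is then smooth in $(N,T,t)$.

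\textbf{Step 1: the null manifold is an exact zero set.} If $T=0$, then $A+N$ is block diagonal with respect to $P_k\oplus Q_k$, and its $P_k$-block equals $AP_k$. Functional calculus respects this block structure, so
\[
C_t(A,A+N)=2AP_k\oplus C_t(AQ_k,AQ_k+N)
\qquad\text{and}\qquad
2A+N=2AP_k\oplus(2AQ_k+N).
\]
For small $N$, the $Q_k$-blocks have singular values close to $2\lambda_{k+1},\dots,2\lambda_n$, which lie strictly below $2\lambda_k$. Hence in both matrices the top $k$ singular values are $2\lambda_1,\dots,2\lambda_k$, and $G(N,0)=0$.

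\textbf{Step 2: the first derivative in $T$ vanishes along the manifold.} I will use the defect identity \eqref{eq:defect}:
\[
C_t=2A+E-(A^t-(A+E)^t)(A^{1-t}-(A+E)^{1-t}).
\]
Differentiate at $T=0$ in direction $T$, with $N$ fixed. The factors $A^\alpha-(A+N)^\alpha$ are supported on the $Q_k$-block. The derivative of $g_k$ at the block-diagonal point $2A+N$ is $Z\mapsto\Ree\Tr(P_kZ)$, because the top-$k$ spectral projection there is exactly $P_k$. Each term of the derivative has the form $XY$ or $YX$ with $Y=Q_kYQ_k$, so $\Tr(P_kXY)=\Tr(P_kXQ_kYQ_kP_k)=0$, and similarly for $YX$. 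The linear parts coming from $2A+E$ cancel between the two Ky Fan norms. Hence $D_TG(N,0)=0$.

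\textbf{Step 3: transverse positivity and uniformity.} Taylor's theorem in $T$ now gives
\[
G(N,T)=\tfrac12 D_T^2G(N,0)[T,T]+O(\|T\|_F^3),
\]
with the remainder uniform in small $N$ and $t\in J$. At $N=0$ the quadratic form $\tfrac12D_T^2G(0,0)[T,T]$ is, by Theorem~\ref{thm:variation}, equal to $\sum_{i\le k}\sum_j\phi_t(\lambda_i,\lambda_j)|T_{ij}|^2$. Every nonzero entry of $T$ has $i\le k$ or $j\le k$, so by Hermitian symmetry this sum is at least $\tfrac12 c_0\|T\|_F^2$, where $c_0=\min_{i,j,\,t\in J}\phi_t(\lambda_i,\lambda_j)>0$ by \eqref{eq:phi-positive} and compactness. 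By joint continuity of $D_T^2G$ in $(N,t)$, shrinking $r$ keeps this form at least $\tfrac14c_0\|T\|_F^2$. Shrinking $r$ further so that the cubic remainder is absorbed yields $G\ge c\|T\|_F^2$. Taking the minimum of the constants over $k=1,\dots,n$ gives $c,r$ depending only on $A$ and $J$.

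\textbf{Consequences.} The bound $F_{k,t}\ge0$ for all $k$ is exactly the weak majorization $s(C_t(A,A+E))\prec_w s(2A+E)$, so \eqref{eq:localUIN} follows from \eqref{eq:KF-dominance}. For the equality case, $F_{k,t}=0$ forces $P_kE=0$ by the lower bound, and conversely $P_kE=0$ means $T=0$, which gives $F_{k,t}=0$ by Step~1. For $k=n$ we have $P_n=I$, so equality occurs only when $E=0$.

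The step I expect to be the main obstacle is Step~2 together with the uniformity in Step~3. One must check carefully that the derivative of $g_k$ at the block-diagonal points $2A+N$ really is $\Ree\Tr(P_k\,\cdot)$, which relies on the persistent gap. One must also check that the cubic remainder is uniform in $N$ as well as in $T$ and $t$, which relies on joint smoothness on a compact neighbourhood.
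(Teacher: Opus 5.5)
Your proposal is correct and is essentially the paper's proof. It uses the same splitting $E=Q_kEQ_k+(E-Q_kEQ_k)$, the exact vanishing of $F_{k,t}$ on the block-diagonal null manifold, the vanishing of the first derivative there via the defect identity \eqref{eq:defect}, and transverse coercivity taken from Theorem~\ref{thm:variation} and propagated by continuity and compactness. The only difference is cosmetic: the paper uses the integral form of Taylor's formula along $E_0+sW$ with a uniform Hessian bound, whereas you expand to second order at $(N,0)$ with a uniform cubic remainder.

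One point to make explicit in Step~2: the Ky Fan differential at the non-Hermitian point $C_t(A,A+N)$ is also $\Ree\Tr(P_k\,\cdot)$. This holds because its $P_k$-block is $2AP_k>0$, so its top $k$ left and right singular vectors are both $e_1,\dots,e_k$. This is what makes the linear terms cancel, and you state the formula only at $2A+N$.
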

	
	\begin{proof}
		Fix $k$ and write
		\[
		P=P_k,
		\qquad
		Q=I-P.
		\]
		Decompose
		\begin{equation}\label{eq:Edec}
			E=E_0+W,
			\qquad
			E_0=QEQ,
			\qquad
			W=E-QEQ.
		\end{equation}
		Then
		\[
		QWQ=0,
		\qquad
		PE_0=E_0P=0,
		\]
		and $E_0$ and $W$ are orthogonal in the Frobenius inner product.
		Moreover,
		\begin{equation}\label{eq:PWPE}
			PW=PE.
		\end{equation}
		
		We first identify the exact zero-deficit directions.  Since $A$
		commutes with $P$ and $Q$, both $A$ and $A+E_0$ are block diagonal
		relative to
		\[
		P\mathbb C^n\oplus Q\mathbb C^n,
		\]
		and their $P$-blocks coincide:
		\[
		P(A+E_0)P=PAP.
		\]
		Functional calculus preserves this block decomposition and therefore
		\[
		P\,C_t(A,A+E_0)\,P=2PAP.
		\]
		Likewise,
		\[
		P(2A+E_0)P=2PAP.
		\]
		Thus the two matrices $C_t(A,A+E_0)$ and $2A+E_0$ have the same
		$P$-block, namely $2PAP$.
		
		Suppose first that $k<n$.  At $E_0=0$, the singular values associated
		with this block are
		\[
		2\lambda_1,\ldots,2\lambda_k,
		\]
		and they are separated from the remaining singular values by the
		strict gap
		\[
		2\lambda_k-2\lambda_{k+1}>0.
		\]
		By continuity, after restricting $\|E_0\|_F$ to a sufficiently small
		neighborhood, the singular values arising from the $Q$-blocks of both
		$2A+E_0$ and $C_t(A,A+E_0)$ remain strictly below those of the
		common $P$-block.  This can be done uniformly for $t\in J$, since $J$
		is compact.  Hence the largest $k$ singular values of both matrices
		are precisely those of $2PAP$, and consequently
		\begin{equation}\label{eq:Fzero}
			F_{k,t}(E_0)=0.
		\end{equation}
		For $k=n$, one has $Q=0$ and hence $E_0=0$, so
		\eqref{eq:Fzero} is immediate.
		
		We next show that the first variation also vanishes along this
		zero-deficit manifold:
		\begin{equation}\label{eq:DFzero}
			DF_{k,t}(E_0)=0.
		\end{equation}
		To see this, put
		\[
		B=A+E_0.
		\]
		Since $A$ and $B$ have the same $P$-block, functional calculus gives
		\[
		P(A^t-B^t)=
		(A^t-B^t)P=0
		\]
		and similarly
		\[
		P(A^{1-t}-B^{1-t})
		=
		(A^{1-t}-B^{1-t})P=0.
		\]
		Thus both factors in the defect identity
		\[
		A+B-C_t(A,B)
		=
		(A^t-B^t)(A^{1-t}-B^{1-t})
		\]
		are supported entirely in the $Q$-block.
		
		Differentiate this product with respect to $B$ in an arbitrary
		Hermitian direction $H$.  The derivative is the sum of two terms,
		each containing one factor supported in the $Q$-block.  Consequently,
		its $P$-diagonal block vanishes:
		\[
		P\,D_B\!\left[
		(A^t-B^t)(A^{1-t}-B^{1-t})
		\right][H]\,P=0.
		\]
		At the block-diagonal base point $E_0$, the largest $k$ singular
		values of both matrices are carried by the common positive $P$-block.
		Hence the differential of the corresponding Ky Fan functional is
		the real trace of the $P$-diagonal block of the variation.  The
		preceding identity therefore implies that the first derivatives of
		the two terms defining $F_{k,t}$ agree, proving
		\eqref{eq:DFzero}.
		
		We now estimate the deficit in directions transverse to the
		zero-deficit manifold.  Define
		\begin{equation}\label{eq:mmin}
			m=
			\min_{\substack{1\le i,j\le n\\ t\in J}}
			\phi_t(\lambda_i,\lambda_j).
		\end{equation}
		By \eqref{eq:phi-positive}, continuity, and compactness of $J$,
		\[
		m>0.
		\]
		For a Hermitian matrix $W$ satisfying $QWQ=0$,
		Theorem~\ref{thm:variation} gives
		\[
		D^2F_{k,t}(0)[W,W]
		=
		2\sum_{i=1}^k\sum_{j=1}^n
		\phi_t(\lambda_i,\lambda_j)|W_{ij}|^2
		\ge
		2m\|PW\|_F^2.
		\]
		Writing $W$ relative to $P\oplus Q$ as
		\[
		W=
		\begin{pmatrix}
			PWP&PWQ\\
			QWP&0
		\end{pmatrix},
		\]
		and using $QWP=(PWQ)^*$, we have
		\[
		\|W\|_F^2
		=
		\|PWP\|_F^2+2\|PWQ\|_F^2,
		\]
		whereas
		\[
		\|PW\|_F^2
		=
		\|PWP\|_F^2+\|PWQ\|_F^2.
		\]
		Therefore
		\[
		2\|PW\|_F^2\ge\|W\|_F^2,
		\]
		and hence
		\begin{equation}\label{eq:Hesslower}
			D^2F_{k,t}(0)[W,W]
			\ge
			m\|W\|_F^2.
		\end{equation}
		
		Let
		\[
		\mathcal T_k
		=
		\{W=W^*:QWQ=0\}.
		\]
		The Frobenius unit sphere of $\mathcal T_k$ is compact.  By taking a
		sufficiently small neighborhood of the origin, the singular-value
		clusters defining the relevant Ky Fan functionals remain separated,
		uniformly for $t\in J$.  In this neighborhood the map
		\[
		(Z,t,W)
		\longmapsto
		D^2F_{k,t}(Z)[W,W]
		\]
		is continuous.  Combining this continuity with
		\eqref{eq:Hesslower}, compactness of $J$, and compactness of the unit
		sphere of $\mathcal T_k$, we may shrink the neighborhood so that
		\begin{equation}\label{eq:Hessuniform}
			D^2F_{k,t}(Z)[W,W]
			\ge
			\frac{m}{2}\|W\|_F^2
		\end{equation}
		for every $t\in J$, every $W\in\mathcal T_k$, and every sufficiently
		small Hermitian $Z$.
		
		Since $E=E_0+W$, Taylor's formula along the segment
		\[
		E_0+sW,\qquad 0\le s\le1,
		\]
		together with \eqref{eq:Fzero} and \eqref{eq:DFzero}, yields
		\begin{align*}
			F_{k,t}(E)
			&=
			\int_0^1(1-s)
			D^2F_{k,t}(E_0+sW)[W,W]\,ds\\
			&\ge
			\frac{m}{4}\|W\|_F^2\\
			&\ge
			\frac{m}{4}\|PE\|_F^2,
		\end{align*}
		where the last inequality follows from \eqref{eq:PWPE}.  Since there
		are only finitely many values of $k$, we may choose a common radius
		for $k=1,\ldots,n$.  Shrinking it once more if necessary ensures
		$A+E>0$.  Thus \eqref{eq:local} holds, for example with
		\[
		c=\frac{m}{4}.
		\]
		
		Since \eqref{eq:local} holds for every $k$, we obtain
		\[
		\KF{C_t(A,A+E)}{k}
		\le
		\KF{2A+E}{k},
		\qquad 1\le k\le n.
		\]
		Equivalently,
		\[
		s(C_t(A,A+E))
		\prec_w
		s(2A+E).
		\]
		The Ky Fan dominance theorem therefore gives
		\eqref{eq:localUIN} for every unitarily invariant norm.
		
		Finally, if $F_{k,t}(E)=0$, then \eqref{eq:local} implies
		$PE=0$.  Since $E$ is Hermitian, also $EP=0$, and hence
		\[
		E=QEQ.
		\]
		Conversely, if $PE=0$, then Hermiticity gives $EP=0$, so $E=QEQ$;
		thus \eqref{eq:Fzero} gives $F_{k,t}(E)=0$.  This proves
		\[
		F_{k,t}(E)=0
		\quad\Longleftrightarrow\quad
		P_kE=0.
		\]
		For $k=n$, one has $P_n=I$, and therefore equality forces $E=0$.
	\end{proof}
	
	\begin{remark}\label{rem:local-scope}
		Theorem~\ref{thm:local} is genuinely local.  The compact restriction
		$J\Subset(0,1)$ is natural at the level of the present argument,
		since the coefficient
		\[
		\phi_t(x,x)=\frac{t(1-t)}{x}
		\]
		degenerates as $t\to0$ or $t\to1$.  Thus the coercivity constant
		obtained above cannot remain bounded away from zero by this method as
		$t$ approaches either endpoint.  The simple-spectrum assumption is
		used to ensure separation and smoothness of the relevant Ky Fan
		singular-value clusters; removing that hypothesis would require a
		separate treatment of spectral multiplicities.
	\end{remark}
	
\section{A sharp global refinement for a common two-point spectrum}

The local result above relies on perturbative control near $A=B$.
We now exhibit a genuinely global higher-dimensional class for which
Bourin's inequality admits a sharp positive remainder in the Loewner
order.  The class consists of positive matrices having the same
two-point spectrum, with no commutativity assumption on their spectral
projections.

\begin{theorem}[Sharp common two-point bound]\label{thm:binary}
	Let $a>b>0$, let $P,Q$ be orthogonal projections, and define
	\begin{equation}\label{eq:ABPQ}
		A=bI+(a-b)P,
		\qquad
		B=bI+(a-b)Q.
	\end{equation}
	For $0<t<1$, put
	\begin{equation}\label{eq:dt}
		d_t=(a^t-b^t)(a^{1-t}-b^{1-t}),
	\end{equation}
	\begin{equation}\label{eq:L}
		L=(\sqrt a+\sqrt b)^2,
	\end{equation}
	and
	\begin{equation}\label{eq:gamma}
		\gamma_t(a,b)
		=
		\frac{\min\{d_t,L-d_t\}}{(a-b)^2}.
	\end{equation}
	Then $\gamma_t(a,b)>0$ and
	\begin{equation}\label{eq:Loewner}
		|C_t(A,B)|
		+
		\gamma_t(a,b)(A-B)^2
		\preceq
		A+B.
	\end{equation}
	Moreover, $\gamma_t(a,b)$ is the largest coefficient for which
	\eqref{eq:Loewner} holds simultaneously for all projection pairs
	$P,Q$ in all dimensions.  The optimality is already witnessed by
	rank-one projections in dimension two.  In particular,
	\begin{equation}\label{eq:binaryUIN}
		\UIN{C_t(A,B)}
		\le
		\UIN{A+B}
	\end{equation}
	for every unitarily invariant norm.
\end{theorem}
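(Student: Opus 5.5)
The plan is to reduce everything to the commutative $C^*$-algebra generated by the single positive contraction-type operator $X=P+Q$. Then \eqref{eq:Loewner} becomes a scalar inequality on $[0,2]$.

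\emph{Step 1 (algebraic collapse).} Since $P$ is a projection, $A^s=b^sI+(a^s-b^s)P$ for every $s>0$, and likewise for $B$. Hence
\[
A^t-B^t=(a^t-b^t)(P-Q),\qquad A^{1-t}-B^{1-t}=(a^{1-t}-b^{1-t})(P-Q).
\]
The defect identity \eqref{eq:defect} then gives $A+B-C_t(A,B)=d_t(P-Q)^2$. In particular $C_t(A,B)$ is Hermitian, which is the key point. Similarly $(A-B)^2=(a-b)^2(P-Q)^2$. Expanding squares gives $(P-Q)^2=2X-X^2$, where $X=P+Q$ has spectrum in $[0,2]$, and $A+B=2bI+(a-b)X$. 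Put $c=a-b$, $h(x)=x(2-x)\ge0$ and $S(x)=2b+cx>0$. Then
\[
C_t=g(X),\qquad g(x)=S(x)-d_th(x),
\]
and \eqref{eq:Loewner} is equivalent, by the spectral theorem for $X$, to
\[
|g(x)|+\gamma c^2h(x)\le S(x)\qquad (x\in\sigma(X)\subseteq[0,2]).
\]

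\emph{Step 2 (scalar inequality).} I will show the scalar inequality holds for all $x\in[0,2]$ whenever $\gamma c^2\le\min\{d_t,L-d_t\}$, splitting according to the sign of $g(x)$.
\begin{itemize}
\item Where $g(x)\ge0$, the inequality reads $\gamma c^2h\le d_th$, which is immediate.
\item Where $g(x)<0$, it reads $(d_t+\gamma c^2)h(x)\le 2S(x)$. So it suffices to prove
\[
\min_{0<x<2}\frac{2S(x)}{h(x)}=L=(\sqrt a+\sqrt b)^2.
\]
I will verify this by elementary calculus. Setting the derivative of $(2b+cx)/(x(2-x))$ to zero gives the quadratic $cx^2+4bx-4b=0$. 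Its positive root can be written in terms of $\sqrt a,\sqrt b$, namely $x^*=2\sqrt b/(\sqrt a+\sqrt b)$, and substituting it should yield $L/2$ for $S/h$. This computation is the one place needing care, and I expect it to be the main obstacle, mostly bookkeeping.
\end{itemize}
Positivity of $\gamma_t$ needs $0<d_t<L$. The bound $d_t>0$ holds by monotonicity. For the upper bound, Cauchy--Schwarz gives
\[
(a^t-b^t)(a^{1-t}-b^{1-t})\le(\sqrt a-\sqrt b)^2,
\]
either by writing both factors as integrals of $s\mapsto a^s\log a$-type expressions, or simply by the AM--GM-type estimate $a^tb^{1-t}+a^{1-t}b^t\ge2\sqrt{ab}$. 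The right-hand side is $<L$.

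\emph{Step 3 (optimality).} Take rank-one projections in $\mathbb C^2$ at angle $\theta$. Then $\sigma(X)=\{1\pm\cos\theta\}$, so any $x\in(0,2)$ occurs as an eigenvalue, and the scalar inequality must hold at every such $x$.
\begin{itemize}
\item Letting $x\downarrow0$ makes $g(x)\to2b>0$ while $h(x)>0$. Then the inequality $\gamma c^2h\le d_th$ forces $\gamma c^2\le d_t$.
\item If $d_t>L/2$, choose $x=x^*$. There $g(x^*)=h(x^*)(L/2-d_t)<0$, so the second case is active with equality in $2S/h=L$. This forces $\gamma c^2\le L-d_t$.
\end{itemize}
Together these give $\gamma\le\gamma_t(a,b)$.

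\emph{Step 4 (norm consequence).} From \eqref{eq:Loewner} we get $|C_t(A,B)|\preceq A+B$. Weyl monotonicity then gives $s_j(C_t(A,B))\le\lambda_j(A+B)=s_j(A+B)$ for every $j$. This yields \eqref{eq:binaryUIN} via Ky Fan dominance \eqref{eq:KF-dominance}.
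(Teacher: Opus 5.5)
Your proposal is correct and follows essentially the same route as the paper: collapse to the functional calculus of $X=P+Q$ via the defect identity, reduce to the scalar inequality on $[0,2]$, minimize $2S(x)/h(x)$ to get $L$ at $x_*=2\sqrt b/(\sqrt a+\sqrt b)$, and witness sharpness with two rank-one projections in $\mathbb C^2$. The differences are cosmetic. You split by the sign of $g(x)$ where the paper uses $s-|s-y|=\min\{y,2s-y\}$. You obtain $\gamma c^2\le d_t$ from any eigenvalue with $g\ge0$, where the paper uses $P+Q=I$, i.e.\ $x=1$. You prove $L>d_t$ via AM--GM, $d_t\le(\sqrt a-\sqrt b)^2$, where the paper expands $L-d_t$ explicitly.
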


\begin{proof}
	Set
	\[
	\delta=a-b>0,
	\qquad
	X=P+Q.
	\]
	Since $P$ and $Q$ are projections,
	\begin{align}
		(P-Q)^2
		&=P+Q-PQ-QP \notag\\
		&=X(2I-X).
		\label{eq:PQ}
	\end{align}
	In particular, $(P-Q)^2$ is a polynomial in the self-adjoint matrix
	$X$.
	
	For every $0<\alpha<1$,
	\[
	A^\alpha=b^\alpha I+(a^\alpha-b^\alpha)P,
	\qquad
	B^\alpha=b^\alpha I+(a^\alpha-b^\alpha)Q.
	\]
	Hence
	\[
	A^t-B^t=(a^t-b^t)(P-Q)
	\]
	and
	\[
	A^{1-t}-B^{1-t}
	=
	(a^{1-t}-b^{1-t})(P-Q).
	\]
	The defect identity \eqref{eq:defect} therefore gives
	\[
	A+B-C_t(A,B)
	=
	d_t(P-Q)^2.
	\]
	Since
	\[
	A+B=2bI+\delta X,
	\]
	we obtain
	\begin{equation}\label{eq:Cbinary}
		C_t(A,B)
		=
		2bI+\delta X-d_tX(2I-X).
	\end{equation}
	Moreover,
	\begin{equation}\label{eq:Sbinary}
		A+B=2bI+\delta X
	\end{equation}
	and
	\begin{equation}\label{eq:Dbinary}
		(A-B)^2
		=
		\delta^2(P-Q)^2
		=
		\delta^2X(2I-X).
	\end{equation}
	
	Thus $C_t(A,B)$ is self-adjoint in this class, and
	$C_t(A,B)$, $A+B$, and $(A-B)^2$ are all real-valued functions of
	the same self-adjoint matrix $X$.  Since $0\preceq P+Q\preceq2I$,
	\[
	\operatorname{spec}(X)\subset[0,2].
	\]
	Consequently, \eqref{eq:Loewner} reduces by spectral functional
	calculus to a scalar inequality on $[0,2]$.
	
	For $0\le x\le2$, set
	\[
	s(x)=2b+\delta x,
	\qquad
	u(x)=x(2-x).
	\]
	At a spectral value $x$, the desired inequality with coefficient
	$\gamma$ becomes
	\[
	|s(x)-d_tu(x)|+\gamma\delta^2u(x)
	\le s(x).
	\]
	For $0<x<2$, where $u(x)>0$, the largest admissible coefficient at
	$x$ is therefore
	\begin{align}
		\frac{s(x)-|s(x)-d_tu(x)|}{\delta^2u(x)}
		&=
		\frac1{\delta^2}
		\min\left\{
		d_t,\,
		\frac{2s(x)}{u(x)}-d_t
		\right\}.
		\label{eq:quotient}
	\end{align}
	Here we used the elementary identity
	\[
	s-|s-y|=\min\{y,\,2s-y\}.
	\]
	
	We next minimize the only $x$-dependent term.  Since
	\[
	\frac{2s(x)}{u(x)}
	=
	\frac{2(2b+\delta x)}{x(2-x)},
	\]
	a direct differentiation shows that
	\begin{equation}\label{eq:minratio}
		\min_{0<x<2}\frac{2s(x)}{u(x)}
		=
		(\sqrt a+\sqrt b)^2
		=
		L,
	\end{equation}
	with the unique minimizer
	\begin{equation}\label{eq:xstar}
		x_*
		=
		\frac{2\sqrt b}{\sqrt a+\sqrt b}.
	\end{equation}
	Also,
	\begin{align}
		L-d_t
		&=
		a+b+2\sqrt{ab}
		-
		(a^t-b^t)(a^{1-t}-b^{1-t}) \notag\\
		&=
		2\sqrt{ab}
		+a^tb^{1-t}
		+a^{1-t}b^t
		>0.
		\label{eq:Lminus}
	\end{align}
	Since $d_t>0$, it follows that
	\[
	\gamma_t(a,b)
	=
	\frac{\min\{d_t,L-d_t\}}{\delta^2}
	>0.
	\]
	Moreover, \eqref{eq:minratio} implies that the expression in
	\eqref{eq:quotient} is at least $\gamma_t(a,b)$ for every
	$0<x<2$.  At $x=0$ and $x=2$ one has $u(x)=0$, and the undivided
	scalar inequality is an equality.  Spectral functional calculus
	therefore proves \eqref{eq:Loewner}.
	
	It remains to prove optimality.  There are two cases.
	
	Suppose first that
	\[
	d_t\le\frac{L}{2}.
	\]
	Then, for every $0<x<2$,
	\[
	\frac{2s(x)}{u(x)}-d_t
	\ge
	L-d_t
	\ge
	d_t.
	\]
	Thus the quotient in \eqref{eq:quotient} is identically
	$d_t/\delta^2$.  In dimension two, take
	\[
	P=
	\begin{pmatrix}
		1&0\\
		0&0
	\end{pmatrix},
	\qquad
	Q=
	\begin{pmatrix}
		0&0\\
		0&1
	\end{pmatrix}.
	\]
	Then $P+Q=I$, so $x=1$ occurs in the spectrum of $X$, and
	\eqref{eq:quotient} equals
	\[
	\frac{d_t}{\delta^2}
	=
	\gamma_t(a,b).
	\]
	Hence no larger universal coefficient is possible.
	
	Suppose next that
	\[
	d_t>\frac{L}{2}.
	\]
	Since $a>b>0$, the number
	\[
	x_*=\frac{2\sqrt b}{\sqrt a+\sqrt b}
	\]
	lies in $(0,1)$.  Let $P$ be the rank-one projection onto
	$e_1=(1,0)^T$, and let $Q$ be the rank-one projection onto
	\[
	v=(r,\sqrt{1-r^2})^T,
	\qquad
	r=1-x_*\in(0,1).
	\]
	For two rank-one projections in dimension two, the eigenvalues of
	$P+Q$ are
	\[
	1\pm|\langle e_1,v\rangle|
	=
	1\pm r,
	\]
	and hence in the present case they are
	\[
	x_*
	\qquad\text{and}\qquad
	2-x_*.
	\]
	At the spectral value $x_*$, \eqref{eq:minratio} gives
	\[
	\frac{2s(x_*)}{u(x_*)}=L.
	\]
	Since $d_t>L/2$, the quotient in \eqref{eq:quotient} at $x_*$ is
	\[
	\frac{L-d_t}{\delta^2}
	=
	\gamma_t(a,b).
	\]
	Thus no coefficient larger than $\gamma_t(a,b)$ can hold for all
	projection pairs.  This proves the asserted sharpness, already in
	dimension two.
	
	Finally, dropping the nonnegative remainder in \eqref{eq:Loewner}
	gives
	\[
	|C_t(A,B)|\preceq A+B.
	\]
	Both sides are positive, so eigenvalue monotonicity yields
	\[
	\lambda_j(|C_t(A,B)|)
	\le
	\lambda_j(A+B),
	\qquad 1\le j\le n.
	\]
	Since
	\[
	\lambda_j(|C_t(A,B)|)=s_j(C_t(A,B)),
	\]
	we obtain
	\[
	s(C_t(A,B))\prec_w s(A+B).
	\]
	The Ky Fan dominance theorem therefore gives
	\eqref{eq:binaryUIN} for every unitarily invariant norm.
\end{proof}
	
	Taking traces in the sharp Loewner refinement yields a quantitative
	stability estimate: within the common two-point-spectrum class, the
	trace-norm deficit controls the squared Frobenius distance between the
	two matrices.
	
	\begin{corollary}\label{cor:trace-refinement}
		Under the assumptions of Theorem~\ref{thm:binary},
		\begin{equation}\label{eq:trace-refinement}
			\Tr(A+B)-\|C_t(A,B)\|_1
			\ge
			\gamma_t(a,b)\|A-B\|_F^2.
		\end{equation}
	\end{corollary}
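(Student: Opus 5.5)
The plan is to take traces in the Loewner inequality \eqref{eq:Loewner} of Theorem~\ref{thm:binary}. Since $\Tr$ is monotone with respect to the Loewner order, that inequality gives
\[
\Tr|C_t(A,B)|+\gamma_t(a,b)\Tr\bigl((A-B)^2\bigr)\le\Tr(A+B).
\]
Two identifications then finish the argument. First, $\Tr|C_t(A,B)|$ is the sum of the singular values of $C_t(A,B)$, so it equals $\|C_t(A,B)\|_1$. Second, $A-B$ is Hermitian, so $\Tr((A-B)^2)=\Tr((A-B)^*(A-B))=\|A-B\|_F^2$. Rearranging the traced inequality then yields \eqref{eq:trace-refinement}.

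There is no real obstacle here. The only point to state explicitly is the trace monotonicity: if $X\preceq Y$, then $Y-X\succeq0$ has nonnegative trace, and hence $\Tr X\le\Tr Y$.

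One could also record that in this class $C_t(A,B)$ is self-adjoint by \eqref{eq:Cbinary}. Then $|C_t(A,B)|$ is its absolute value in the functional calculus of $X=P+Q$. This observation is not needed, however, because the trace of $|C_t|$ equals the trace norm in general.
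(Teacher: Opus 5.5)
Your proposal is correct and is exactly the paper's argument: take traces in the Loewner inequality \eqref{eq:Loewner}, then identify $\Tr|C_t(A,B)|=\|C_t(A,B)\|_1$ and, since $A-B$ is Hermitian, $\Tr((A-B)^2)=\|A-B\|_F^2$. Your explicit justification that the trace is monotone under $\preceq$ is the only step the paper leaves implicit.
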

	
	\begin{proof}
		Taking traces in \eqref{eq:Loewner} gives
		\[
		\Tr|C_t(A,B)|
		+
		\gamma_t(a,b)\Tr\!\left((A-B)^2\right)
		\le
		\Tr(A+B).
		\]
		Since
		\[
		\Tr|C_t(A,B)|=\|C_t(A,B)\|_1
		\]
		and, because $A-B$ is Hermitian,
		\[
		\Tr\!\left((A-B)^2\right)
		=
		\|A-B\|_F^2,
		\]
		the desired inequality follows.
	\end{proof}
	
	\begin{remark}\label{rem:trace-optimality}
		The optimality assertion in Theorem~\ref{thm:binary} concerns the
		coefficient in the Loewner-order refinement \eqref{eq:Loewner}.
		Although \eqref{eq:trace-refinement} is obtained from that sharp
		matrix inequality by taking traces, this does not by itself imply
		that $\gamma_t(a,b)$ is the optimal coefficient in the corresponding
		trace-deficit inequality.  No separate trace-level optimality claim
		is made here.
	\end{remark}

	\section*{Declarations}
	
	\noindent
	\textbf{Funding.}
	No external funding was received for this work.
	
	\medskip
	
	\noindent
	\textbf{Competing interests.}
	The author declares that there are no competing interests.
	
	\medskip
	
	\noindent
	\textbf{Data availability.}
	No external datasets were used in this study.  The results are
	theoretical and all arguments needed for the stated conclusions are
	contained in the manuscript.
	
	\medskip
	
	\noindent
	\textbf{Author contributions.}
	The author is solely responsible for the mathematical development and
	preparation of the manuscript.
	

\end{document}